\newtheorem{definition}{Definition}[section]
\newtheorem{lemma}[definition]{Lemma}
\newtheorem{proposition}[definition]{Proposition}
\newtheorem{algorithm}[definition]{Algorithm}
\theoremstyle{remark}
\newtheorem{example}[definition]{Example}
\newcommand{\EE}{{\mathbb E}}
\newcommand{\N}{{\mathbb N}}
\newcommand{\R}{{\mathbb R}}
\newcommand{\V}{{\mathbb V}}
\begin{document}

\title{Fast orthogonal transforms for multi-level quasi-Monte Carlo integration}

\author{Christian Irrgeher\thanks{Christian Irrgeher, Institute of Financial Mathematics, Johannes Kepler University Linz, Altenbergerstra{\ss}e 69, A-4040 Linz, Austria. e-mail: {\tt christian.irrgeher@jku.at} \quad The author is supported by the Austrian Science Foundation (FWF), Project P21943 .}~~and Gunther Leobacher\thanks{Gunther Leobacher, Institute of Financial Mathematics, Johannes Kepler University Linz, Altenbergerstra{\ss}e 69, A-4040 Linz, Austria. e-mail: {\tt gunther.leobacher@jku.at} \quad The author is partially supported by the Austrian Science Foundation (FWF), Project P21196.}}

\maketitle

\begin{abstract}
We combine a generic method for finding fast orthogonal transforms for a given quasi-Monte Carlo integration problem with the multilevel Monte Carlo method. It is shown by example that this combined method can vastly improve the efficiency of quasi-Monte Carlo.
\end{abstract}


\section{Introduction}\label{sec:introduction}

Many simulation problems from finance and other applied fields can be written in the form $\EE(f(X))$, where $f$ is a  measurable function on $\R^n$ and $X$ is a standard normal vector, that is, $X=(X_1,\ldots,X_n)$ is jointly normally  distributed with $\EE(X_j)=0$ and $\EE(X_jX_k)=\delta_{jk}$. It is a trivial observation that 
\begin{align}\label{eq:fundamental}
\EE(f(X))=\EE(f(U\!X)) 
\end{align}
for every orthogonal transform $U:\R^n\longrightarrow\R^n$. It has been observed in a number of articles (\cite{abg98,Moskowitz96,leo}) that, while this reformulation does not change the simulation problem from the probabilistic point of view, it does make a -- sometimes big --  difference when quasi-Monte Carlo (QMC) simulation is applied to generate the realizations of $X$.

Prominent examples are supplied by the well-known Brownian bridge \cite{Moskowitz96} and principal component analysis (PCA) \cite{abg98} constructions of Brownian paths which will be detailed in the following paragraphs. Assume we want to calculate an approximation to $\EE(g(B))$ where $B$ is a Brownian motion with index set $[0,T]$. In most applications this can be reasonably approximated by $\EE(\tilde g(B_\frac{T}{n},\ldots,B_\frac{T n}{n}))$, where $\tilde g$ is a corresponding function taking as its argument a {\em discrete Brownian path}, by which we mean a normal vector with covariance matrix
\begin{align*}
\Sigma:=\Big(\frac{T}{n}\min(j,k)\Big)_{j,k=1}^n=\frac{T}{n}\left(\begin{array}{cccccccc}
1&1&1&\ldots&1\\
1&2&2&\ldots&2\\
1&2&3&\ldots&3\\
\vdots&\vdots&\vdots&\ddots&\vdots\\
1&2&3&\ldots&n
\end{array}\right)\,.
\end{align*}

There are three classical methods for sampling from $(B_\frac{T}{n},\ldots,B_\frac{nT}{n})$ given a standard normal vector $X$, namely the forward method, the Brownian bridge construction  and the principal component analysis construction. All of these constructions may be written in the form $(B_\frac{T}{n},\ldots,B_\frac{nT}{n})=A X$, where $A$ is an $n\times n$ real matrix with $ A A^\top=\Sigma$.

For example, the matrix corresponding to the forward method is 
\begin{align}\label{eq:summation-matrix} A=S:=\sqrt{\frac{T}{n}}\left(\begin{array}{cccccccc}
1&0&\ldots&0\\
1&1&\ldots&0\\
\vdots&\vdots&\ddots&\vdots\\
1&1&\ldots&1
\end{array}\right)\,,
\end{align}
while PCA corresponds to  $A=V\!D$, where $\Sigma=V\!D^2V^\top$ is the singular value decomposition of $\Sigma$. A corresponding decomposition for the Brownian bridge algorithm is given, for example, in Larcher, Leobacher \& Scheicher \cite{lalesch}.

It has been observed by Papageorgiou \cite{papa} that $A A^\top = \Sigma$ if and only if $A=SU$ for some orthogonal matrix $U$, so that every linear construction of $(B_\frac{T}{n},\ldots,B_\frac{nT}{n})$ corresponds to an orthogonal transform of $\R^n$. In that sense the forward method corresponds to the identity, PCA corresponds to $S^{-1}V\!D$ and Brownian bridge corresponds to the inverse Haar transform, see Leobacher \cite{leo11}.

Thus our original simulation problem can be written, as
\begin{align*}
\EE(\tilde g(B_\frac{T}{n},\ldots,B_\frac{T n}{n}))=\EE(\tilde g(S X))=\EE(f(X))
\end{align*}
with $f=\tilde g\circ S$. In the context of discrete Brownian paths this corresponds to the forward method. Consequently, the same problem using the Brownian bridge takes on the form $\EE(f(H^{-1} X))$, where $H$ is the matrix of the Haar transform, and has the form $\EE(f(S^{-1}VD X))$, with $S,$ $V,$ $D$ as above, when PCA is used.

Papageorgiou \cite{papa} noted that whether or not the Brownian bridge and PCA constructions enhance the performance of QMC methods depends critically on the integrand $f$ and he provides an example of a financial option where those two methods give much worse results than the forward method. That lead to the idea of searching for orthogonal transform tailored to the integrand. Imai \& Tan \cite{imaitan07} propose a general technique for this problem which they call linear transform (LT) method.
\\

The exact reason why orthogonal transforms might have the effect to make a problem more suitable for QMC is still unknown. Caflish et. al. \cite{cmo97} propose that those transforms diminish the so-called {\em effective dimension} of the problem. Owen \cite{owen2012} provided the concept of {\em effective dimension of a function space}. The least that can be said with confidence is that introducing an orthogonal transform does not introduce a bias and that there are choices (like the identity) that make the problem at least equally well suited for QMC as the original one. 

While applying a suitable orthogonal transform to an integration problem may increase the performance of QMC simulation, there is also a disadvantage: the computation of the orthogonal transform incurs a cost, which in general is of the order $O(n^2)$. For large $n$ this cost is likely to swallow any gains from the transform. In \cite{leo11} it is therefore proposed to concentrate on orthogonal transforms which have cost of the order $O(n\log(n))$ or less.

Examples of such {\em fast orthogonal transforms} include discrete sine and cosine transform, Walsh and (inverse) Haar transform as well as the orthogonal matrix corresponding to the PCA, see Scheicher \cite{schei} and Leobacher \cite{leo11}.  
\\

A relatively recent approach to enhance the efficiency of Monte Carlo simulation has been proposed by  Giles \cite{giles08} and Heinrich \cite{heinrich01}. They propose a multilevel procedure by combining  Monte Carlo  based on different time discretizations. The improvement in computational efficiency by using quasi-Monte Carlo instead of Monte Carlo together with the multilevel method is shown in Giles \& Waterhouse \cite{giles09} where the authors used a rank-1 lattice rule with a random shift. Furthermore, they give a short discussion on the three classical sampling methods mentioned above. We contribute to the topic by finding an orthogonal transform adapted to the multilevel method, thus making it even more efficient.
\\

The remainder of the paper is organized as follows. Section \ref{sec:householder} reviews basic properties of Householder reflections and in Section \ref{sec:regression} we describe an algorithm for finding a fast orthogonal transform using Householder reflections. The main part of our article, Section \ref{sec:mlqmc}, recalls some of the basics of multilevel (quasi-)Monte Carlo and  discusses how the ideas of Section \ref{sec:regression} can be  carried over to multilevel quasi-Monte Carlo integration.

Section \ref{sec:numeric} gives a numerical example where the method described earlier is applied to an example from finance. We will see that the method improves the efficency of multilevel quasi-Monte Carlo integration.

\section{Householder Reflections}\label{sec:householder}

We recall the definition and basic properties of Householder reflections from Golub \& Van Loan \cite{golub96}.
\begin{definition}
A matrix of the form
\begin{align*}
U =I-2 \frac{v v^\top}{v^\top\!v} \,,
\end{align*}
where $v\in\R^n$, is called a {\em Householder reflection}. The vector $v$ is called the defining {\em Householder vector}.
\end{definition}

In the following proposition, $e_1$ denotes the first canonical basis vector in $\R^n$, $e_1=(1,0,\ldots,0)$.

\begin{proposition}
Householder reflections have the following properties:
\begin{enumerate}
\item Let $U$ be a Householder reflection with Householder vector $v$. If $x\in\R^n$ is a vector then $Ux$ is the reflection of $x$ in the hyperplane $\mathrm{span}\{v\}^\perp$. In particular, $U$ is orthogonal and symmetric, i.e.\ $U^{-1}=U$.
\item Given any vector $a\in\R^n$ we can find $v\in\R^n$ such that for the corresponding Householder reflection $U$ we have $Ua=\|a\|e_1$. The computation of the Householder vector uses $3n$ floating point operations.
\item The computation of $Ux$ uses at most $4n$ floating point operations.
\end{enumerate}
\end{proposition}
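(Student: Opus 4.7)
The plan is to verify each of the three claims by short direct computations. The only genuinely interesting ingredient is the ansatz in part 2; parts 1 and 3 are essentially just unfolding the definition.

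For part 1, I would first observe that $U^\top = U$ is immediate from the formula, since $(vv^\top)^\top = vv^\top$ and $v^\top v$ is a scalar. To identify $U$ as a reflection, I would decompose an arbitrary $x\in\R^n$ into its components parallel and perpendicular to $v$, namely $x = x_\parallel + x_\perp$ with $x_\parallel = \frac{v^\top x}{v^\top v}\,v$, and then substitute into the definition to find $Ux = x - 2x_\parallel = x_\perp - x_\parallel$. This is exactly the reflection of $x$ in $\mathrm{span}\{v\}^\perp$. Applying $U$ a second time flips the sign of $x_\parallel$ back, so $U^2 = I$, and combining with $U^\top = U$ gives $U^{-1} = U^\top$, which is orthogonality.

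For part 2, the natural ansatz is $v := a - \|a\|e_1$ (with the sign of $\|a\|$ chosen opposite to that of $a_1$ in practice, for numerical stability, but this is irrelevant for the mathematical statement). A short calculation gives
\begin{align*}
v^\top v &= 2\|a\|\bigl(\|a\| - a_1\bigr), & v^\top a &= \|a\|\bigl(\|a\| - a_1\bigr),
\end{align*}
so the scalar coefficient $2 v^\top a / v^\top v$ equals $1$, and therefore $Ua = a - v = \|a\|e_1$. The operation count is straightforward: evaluating $\|a\|$ costs about $2n$ flops (an inner product plus a square root), and overwriting the first coordinate of $a$ to form $v$ costs one more, which is within the claimed $3n$ bound. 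The only subtlety is the degenerate case $a = \|a\|e_1$, where $v = 0$; one then simply takes $U = I$, or equivalently uses the alternative sign so that $v \ne 0$.

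For part 3, I would rewrite $Ux$ as $x - \alpha v$ where $\alpha = 2(v^\top x)/(v^\top v)$. Assuming that $v^\top v$ has already been stored when the Householder vector was constructed (which is the standard convention, matching the flop count in part 2), the dominant costs are: the inner product $v^\top x$ ($2n$ flops), scaling $v$ by $\alpha$ ($n$ flops plus a constant for computing $\alpha$ itself), and the subtraction $x - \alpha v$ ($n$ flops). Summing gives the asserted $4n$ bound. The main thing to be careful about is the bookkeeping convention for $v^\top v$; without caching it one would instead obtain a bound of $6n$, still linear in $n$.
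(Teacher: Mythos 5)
Your verification is correct. Note, however, that the paper does not actually prove this proposition --- it simply cites Golub \& Van Loan, Chapter 5.1 --- so there is no in-paper argument to compare against; what you have written is essentially the standard derivation from that reference, made self-contained. All three computations check out: the orthogonal decomposition $x = x_\parallel + x_\perp$ in part 1, the identity $2v^\top a/v^\top v = 1$ for $v = a - \|a\|e_1$ in part 2, and the $4n$ count for $x - \alpha v$ in part 3 (your caveat about caching $v^\top v$ is exactly the convention under which the stated counts hold). One small correction to your aside in part 2: the sign choice is not entirely irrelevant to the mathematical statement, since $v = a + \|a\|e_1$ yields $Ua = -\|a\|e_1$ rather than $\|a\|e_1$; only the sign you actually used delivers the literal claim, and the stability-motivated alternative would require restating the target as $\pm\|a\|e_1$. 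Your handling of the degenerate case $a = \|a\|e_1$ is fine (any nonzero $v$ orthogonal to $a$ also works there, if one insists on a genuine reflection rather than $U=I$).
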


\begin{proof}
See Golub \& Van Loan \cite[Chapter 5.1]{golub96}.
\end{proof}

\section{Regression Algorithm}\label{sec:regression}

In this section we give a short description of a rather general method for constructing fast and efficient orthogonal transforms. Parts of the material have already been presented in \cite{il12a}, but we include them to make the paper self-contained.

Let $f:\R^n\longrightarrow\R$ be a measurable function with $\EE(f(X)^2)<\infty$ for a standard normal vector $X$. Wang \& Sloan \cite{sw10} consider functions of the form 
\begin{align}\label{eq:ws}
f(X)=g(w_1^\top X,\ldots,w_m^\top X)
\end{align}
with $w_1,\ldots, w_m\in\R^n$ and $g:\R^m\longrightarrow\R$. The authors show that for such functions there exists an orthogonal transform that reduces the dimension of $f$ to at most $m$. Therefore the integration problem is not as high-dimensional as it seems and we have a convergence rate of the QMC algorithm applied to the transformed problem corresponding to $m$ rather than $n$. We give a slightly modified version of their arguments to reduce the dimension of $f$, because we suggest using Householder reflections to generate the orthogonal transform which guarantees that the transform can be applied using at most $O(n\log(n))$ operations if $m\leq\log(n)$. 

Assume that $w_1$ is not the zero vector and let $U_1:\R^n\rightarrow\R^n$ be a Householder reflection which maps $e_1$ to $w_1/\|w_1\|$. Then $w_1^\top U_1 X=\|w_1\| e_1^\top X=\|w_1\| X_1$ and therefore 
\begin{align*}
f(U_1 X)=g(\|w_1\| X_1,(U_1w_2)^\top  X,\ldots,(U_1 w_m)^\top X).
\end{align*}
Next we write $(U_1 w_k)^\top X=(U_1 w_k)^\top_1 X_1+(U_1w_k)^\top_{2\ldots n} X_{2\ldots n}$. That is, 
\begin{align*}
f(U_1X)=g_{1}(X_1, w_{2,1}^\top X_{2\ldots n},\ldots,w_{m,1}^\top X_{2\ldots n})
\end{align*}
where $w_{2,1},\ldots,w_{m,1}\in\R^{n-1}$. Assuming that $w_{2,1}\ne 0$, let $\tilde U_2:\R^{n-1}\rightarrow\R^{n-1}$ be the Householder reflection that maps $e_1$ to $w_{2,1}/\|w_{2,1}\|$ and let
\begin{align*}
U_2=\left(\begin{array}{ccc}1&0\\0&\tilde U_2\end{array}\right)\,.
\end{align*}
Then $U_2$ is a Householder reflection from $\R^n$ to $\R^{n}$ and
\begin{align*}
f(U_1U_2X)=g_{2}(X_1,X_2,w_{3,2}^{\;\top} X_{3\ldots n},\ldots,w_{m,2}^{\;\top} X_{3\ldots n})\,
\end{align*}
with $w_{3,2},\dots,w_{m,2}\in\R^{n-2}$. Proceeding that way one arrives at 
\begin{align*}
f(U_1\cdots U_{\hat m} X)=g_{\hat m}(X_1,X_2,\ldots,X_{\hat m})\,
\end{align*}
for some $\hat m\le m$ (We may have $\hat m<m$ if some transformed $w_k$ are zero).\\

In the spirit of \cite{sw10} we propose a procedure for more general integration problems. Let us assume that the function $f$ is of the form
\begin{align*}
f(x)=\tilde g(h_1(x),\ldots,h_m(x))
\end{align*}
where $m<n$, $\tilde g:\R^m\longrightarrow\R$ and $h_k:\R^n\longrightarrow\R,~k=1,\ldots,m$. We want to approximate every $h_k$ by a linear function, i.e.
\begin{align*}
h_k(x)\approx a_k^\top x+b_k
\end{align*}
with $a_k\in\R^{n}$ and $b_k\in\R$. The approximation is done by a ``linear regression'' approach and therefore, for every $k=1,\ldots,m$, we minimize 
\begin{align*}
\EE\left(\left(h_k(X)-a_k^\top X-b_k\right)^2\right)\rightarrow\min.
\end{align*}
First order conditions give for $k=1,\ldots,m$
\begin{align}
a_{k,j}&=\EE\left(X_j h_k(X)\right),\quad j=1,\dots,n\,;\label{eq:firstord1}\\
b_k&=\EE\left(h_k(X)\right)\,.\label{eq:firstord2}
\end{align}
Therefore, (\ref{eq:firstord1})-(\ref{eq:firstord2}) minimizes the variance of the difference between each $h_k(X)$ and its linear approximation $a_k^\top X+b_k$. So
\begin{align*}
\V\bigl(h_k(X)\bigr)&=\EE\Bigl(\bigl(h_k(X)-b_k\bigr)^2\Bigr)\\
&=\EE\Bigl(\bigl(a_k^\top X\bigr)^2+\bigl(h_k(X)-b_k-a_k^\top X\bigr)^2\Bigr)\\
&=\bigl\|a_k\bigr\|^2+\V\Bigl(h_k(X)-a_k^\top X\Bigr)\;.
\end{align*}
That is, $\|a_k\|^2/\V(h_k(X))$ measures the fraction of variance captured by the linear approximation. 

Now we approximate the function $f$ by substituting the $h_k$ with the linear functions obtained by the linear regression, i.e.
\begin{align*}
f(x)\approx\tilde g(a_1^\top x+b_1,\ldots,a_m^\top x+b_m)=g(a_1^\top x,\ldots,a_m^\top x).
\end{align*}
Therefore $f$ is approximated by a function of the form (\ref{eq:ws}) and we can proceed in the same way as at the beginning of this section to determine a fast orthogonal transform by using Householder reflections. 

Note that the method is only practical if the expectations $\EE(X_j h_k(X))$ in (\ref{eq:firstord1}) can be computed explicitly or at least efficiently. After the statement of the algorithm we will give an example where explicit calculation is possible.

\begin{algorithm}\label{alg:general}
Let $X_1,\ldots,X_n$ be independent standard normal variables. Let $f$ be a function $f:\R^n\longrightarrow \R$, which is of the form $f=g\circ h$ where $h:\R^n\longrightarrow \R^m$ and $g:\R^m\longrightarrow \R$.
\begin{enumerate}
\item Start with $k,\ell=1$ and $U=I$;
\item\label{it:goto} $a_{k,j}:=\EE(X_j h_k(UX))$ for $j=k,\ldots,n$;
\item $a_{k,j}:=0$ for $j=1,\ldots,k-1$;
\item if $\|a_k\|=0$ go to \ref{it:jump};
\item else let $U_\ell$ be a Householder reflection that maps 
$e_\ell$ to $a_k/\|a_k\|$;
\item $U=U U_\ell$; $\ell=\ell+1$;
\item \label{it:jump}$k=k+1$;
\item while $k\leq m$, go back to \ref{it:goto};
\item\label{it:final} Compute $\EE(f(UX))$ using QMC.
\end{enumerate}
\end{algorithm}

\begin{example}\label{ex:asian} We give an example from finance for which Algorithm \ref{alg:general} can be applied efficiently. Motivated by a discrete arithmetic Asian option let us consider
\begin{align*}
f(X)=\max\left(\sum_{k=1}^{n}w_{k}\exp\left(\sum_{i=1}^{n}(c_{k,i}X_i+d_{k,i})\right)-K,0\right).
\end{align*}
with $w_k, c_{k,i}, d_{k,i}\in\R$. Now we can write $f(X)=g(h_1(X))$ with $g(y)=\max(y-K,0)$ and $h_1(X)=\sum_{k=1}^{n}w_k\exp(\sum_{i=1}^{n}(c_{k,i}X_i+d_{k,i}))$. In that case we can compute $\EE(X_j h_1(X))$ explicitly. It is easily verified that, with $\phi$ denoting the standard normal density, $\phi(x)=\exp(-x^2/2)/\sqrt{2\pi}$,
\begin{align*}
\int_{\R}\exp(cx+d)\phi(x)dx=\exp(c^2/2+d)
\end{align*}
and
\begin{align*}
\int_{\R}x\exp(cx+d)\phi(x)dx=c\exp(c^2/2+d)
\end{align*}
for any $c,d\in\R$. Therefore, we obtain
\begin{align*}
a_{1,j}&=\EE(X_{j}h_{1}(X))\\
&=\int_{\R}\dots\int_{\R}x_j h_1(x)\phi(x_1)\dots\phi(x_n)dx_1\dots dx_n\\
&=\sum_{k=1}^{n}w_k c_{k,j}\exp\left(\sum_{i=1}^{n}\frac{c_{k,i}^2}{2}+d_{k,i}\right).
\end{align*}
In \cite{il12a} it was calculated that for practical parameters $\|a\|^2$ is typically larger than $0.99\cdot \V(h_1(X))$. 
\end{example}

\section{Multilevel Quasi-Monte Carlo}\label{sec:mlqmc}

We start with an abstract formulation of the multilevel (quasi-)Monte Carlo method: suppose we want to approximate $\EE(Y)$ for some random variable $Y$ which has finite expectation. Suppose further that we have a sequence of sufficiently regular functions $f^\ell:\R^{m^\ell}\rightarrow \R$ such that 
\begin{align}\label{eq:mc_approx}
\lim_{\ell\rightarrow\infty} \EE(f^{\ell}(X^\ell))=\EE(Y)\,,
\end{align}
where for each $\ell\ge 0$, $X^\ell$ denotes an $m^\ell$-dimensional standard normal vector. (\ref{eq:mc_approx}) states that there exists a sequence of algorithms which approximate $\EE(Y)$ with increasing accuracy. For example, if $f^\ell(X^\ell)$ has finite variance, we can approximate $\EE(Y)$ by $\frac{1}{N}\sum_{k=0}^{N-1} f^\ell(X^\ell_k)$  using sufficiently large $\ell$ and $N$, where $(X^\ell_k)_{k\ge 0}$ is a sequence of independent standard normal vectors.

Usually, evaluation of $f^\ell(X^\ell)$ becomes more costly with increasing $\ell$ and $N$. Multilevel methods sometimes help us to save significant proportions of computing time by computing more samples for the coarser approximations, which need less computing time but have higher variance.

We will need the following definition in the statement of the multilevel Monte Carlo method: for any $m\in\N$ and any $\ell\in\N_0$ we call the $m^{\ell-1}\times m^\ell$ matrix $C_{m,\ell}=\bigl((C_{m,\ell})_{i,j}\bigr)_{i,j}$ with 
\begin{align*}
(C_{m,\ell})_{i,j}:=\left\{\begin{array}{cl}\frac{1}{\sqrt{m}}&\quad\mbox{ if }~(i-1)m+1\leq j\leq i\,m\\0& \quad\mbox{ else }\end{array}\right.
\end{align*}
the {\em coarsening matrix} from level $\ell$ to level $\ell-1$. For example, in the case of $m=2$ the coarsening matrix is given by
\begin{align*}
C_{2,\ell}:=\left(\begin{array}{cccccccc}\frac{1}{\sqrt{2}}&\frac{1}{\sqrt{2}}&0&0&0&\dots&0&0\\
0&0&\frac{1}{\sqrt{2}}&\frac{1}{\sqrt{2}}&0&\dots&0&0\\
\vdots&\vdots&\vdots&\vdots&\vdots& &\vdots&\vdots\\
0&0&0&0&0&\dots&\frac{1}{\sqrt{2}}&\frac{1}{\sqrt{2}}
\end{array}\right)\,.
\end{align*}

The following lemma is simple to verify and therefore we leave the proof to the reader.

\begin{lemma} 
Let $m\in\N$. If $X^\ell$ is an $m^\ell$--dimensional standard normal vector, then $C_{m,\ell}X^\ell$ is an $m^{\ell-1}$--dimensional standard normal vector.\flushright\qed
\end{lemma}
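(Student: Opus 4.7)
The plan is to exploit the fact that any linear image of a standard normal vector is again jointly normal, so the conclusion reduces entirely to a covariance computation. Concretely, since $X^\ell$ is standard normal, $C_{m,\ell}X^\ell$ is an $m^{\ell-1}$-dimensional Gaussian vector with mean zero and covariance matrix $C_{m,\ell}\,C_{m,\ell}^\top$. Thus it suffices to verify that $C_{m,\ell}\,C_{m,\ell}^\top = I_{m^{\ell-1}}$.

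To do this I would read off the rows of $C_{m,\ell}$ directly from the definition: row $i$ has the value $1/\sqrt{m}$ in the $m$ positions $(i-1)m+1,\ldots,i\,m$ and zero everywhere else. Since the support intervals $\{(i-1)m+1,\ldots,i\,m\}$ for different indices $i$ are pairwise disjoint, any two distinct rows of $C_{m,\ell}$ are orthogonal, giving the off-diagonal entries of $C_{m,\ell}\,C_{m,\ell}^\top$ as zero. Meanwhile the squared norm of each row is $m\cdot(1/\sqrt{m})^2=1$, which yields ones on the diagonal. Hence $C_{m,\ell}\,C_{m,\ell}^\top=I_{m^{\ell-1}}$, and combined with the Gaussian observation above this shows that $C_{m,\ell}X^\ell$ is an $m^{\ell-1}$-dimensional standard normal vector.

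There is no real obstacle here; the only thing one has to be a bit careful about is the indexing of the blocks of $m$ consecutive columns so as to see the row-disjointness of supports clearly. Once that is set up, the rest is a one-line calculation.
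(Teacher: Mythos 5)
Your argument is correct and is precisely the ``simple verification'' the authors leave to the reader: since the rows of $C_{m,\ell}$ have pairwise disjoint supports and squared norm $m\cdot(1/\sqrt{m})^2=1$, one gets $C_{m,\ell}C_{m,\ell}^\top=I_{m^{\ell-1}}$, and a linear image of a standard normal vector is Gaussian with covariance $C_{m,\ell}C_{m,\ell}^\top$. The paper itself supplies no proof, so there is nothing further to compare against.
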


Obviously we have 
\begin{align}
\EE(Y)\approx\EE\left(f^L(X^L)\right)&=\EE\left(f^0(X^0)\right)+\sum_{\ell=1}^L \EE\left(f^\ell(X^\ell)\right)-\EE\left(f^{\ell-1}(X^{\ell-1})\right)\nonumber\\
&=\EE\left(f^0(X^0)\right)+\sum_{\ell=1}^L \EE\left(f^\ell(X^\ell)\right)-\EE\left(f^{\ell-1}(C_{m,\ell}X^{\ell})\right)\nonumber\\
&=\EE\left(f^0(X^0)\right)+\sum_{\ell=1}^L \EE\left(f^\ell(X^\ell)-f^{\ell-1}(C_{m,\ell}X^{\ell})\right)\label{eq:mlmcrep}
\end{align}

Equation (\ref{eq:mlmcrep}) becomes useful if, as is often the case in practice, the expectation $\EE\left(f^\ell(X^\ell)-f^{\ell-1}(C_{m,\ell}X^{\ell})\right)$ can be approximated to the required level of accuracy using less function evaluations for bigger $\ell$ while the costs per function evaluation increases. One typical situation where this occurs is when a stochastic differential equation is solved numerically using time discretization with $m^\ell$ time steps and $f^\ell$ is some function on the set of solution paths. See \cite{giles08} for how to exploit this representation.

In finance, $f^\ell$ is typically of the form $f^\ell(X)=\psi(h^\ell(X))$ for some functions $h^\ell:\R^{m^\ell}\longrightarrow\R$  and $\psi:\R\longrightarrow\R$. In that context, $h^\ell$ is some function taking as its argument a discrete (geometric) Brownian path, like the maximum or the average, and $\psi$ is the payoff that depends on the outcome of $h^\ell$.
\begin{align*}
\EE\left(f^L(X^L)\right)&=\EE\left(f^0(X^0)\right)+\sum_{\ell=1}^L \EE\left(\psi\left(h^\ell(X^\ell)\right)-\psi\left(h^{\ell-1}(C_{m,\ell}X^{\ell})\right)\right)\\
&=\EE\left(\psi\left(h^0(X^0)\right)\right)+\sum_{\ell=1}^L \EE\left(g^\ell\bigl(h^\ell_1(X^\ell),h^\ell_2(X^{\ell})\bigr)\right)\,,
\end{align*}
where $h_1^\ell=h^\ell$, $h_2^\ell=h^{\ell-1}\circ C_{m,l}$ and $g^\ell(y_1,y_2)=\psi(y_1)-\psi(y_2)$.

Now the integrands are precisely of the form covered by Algorithm \ref{alg:general}. It is therefore sensible to apply the corresponding orthogonal transform $U^\ell:\R^\ell\longrightarrow\R^\ell$ at each level such that we get \begin{align*}
\EE\left(f^{L}(X^{L})\right)=\EE\left(f^0(U^0X^0)\right)+\sum_{\ell=1}^L \EE\left(g^\ell\left(h^\ell_1(U^\ell X^\ell),h^\ell_2(U^\ell X^{\ell})\right)\right)\,.
\end{align*}
Of course, we are free to try any other set of orthogonal transforms, like PCA. The advantage of using the regression algorithm is that here at each level the orthogonal transform is determined by taking both the fine and the coarse discretization into account. 

In the next section we shall try our method on a concrete example from finance, the Asian option.

\section{Asian Option}\label{sec:numeric}

We will consider an Asian call option in the Black-Scholes model, i.e.\ under the risk-neutral measure the stock price process $S=(S_{t})_{t\geq0}$ is given by the stochastic differential equation (SDE) 
\begin{align*}
dS_{t}=rS_{t}dt+\sigma S_{t}dB_{t}
\end{align*}
where $r$ is the interest rate, $\sigma$ is the volatility and $(B_{t})_{t\geq0}$ is a standard Brownian motion. Given the stock price $S_0$ at time $0$, the solution of the SDE is given by 
\begin{align*}
S_{t}=S_{0}\exp\left(\left(r-\sigma^2/2\right)t+\sigma B_{t}\right).
\end{align*}
The payoff of the Asian call option with fixed strike price $K$, maturity $T$ and underlying $S$ is 
\begin{align*}
\max\left(\frac{1}{T}\int_{0}^{T}S_{t}dt-K,0\right).
\end{align*}
That is, if the average stock price over the time interval $[0,T]$ is above level $K$, the option pays its holder at time $T$ the difference between that value and $K$, otherwise it pays nothing.

Martingale pricing theory tells us that the price of the option is given by the discounted expectation of the payoff function under the risk-neutral measure, see Bj\"ork \cite[Chapter 10]{bjoerk}, i.e.
\begin{align*}
C=\exp(-rT)\EE\left(\max\left(\frac{1}{T}\int_0^TS_tdt-K,0\right)\right)\,.
\end{align*}
To approximate $C$ in the time-continuous model, a common way is to use (multilevel) quasi-Monte Carlo integration to compute the expectation. To that end we first approximate the integral by a sum: For any equidistant time discretization with $n\in\N$ points, 
\begin{align*}
\frac{1}{T}\int_0^T S_tdt\approx \frac{1}{n}\sum_{k=1}^nS_k(X)
\end{align*}
where $X=(X_1,\ldots,X_n)$ is a standard normal vector and
\begin{align*}
S_{k}(X)=S_{0}\exp\left(\left(r-\frac{\sigma^2}{2}\right)k\frac{T}{n}+\sigma\sqrt{\frac{T}{n}}\sum_{i=1}^{k}X_i\right),\quad k=1,\ldots,n\,.
\end{align*}
Therefore the payoff function of the Asian option is approximately 
\begin{align}\label{eq:discretepayoff}
f(X)=\max\left(\frac{1}{n}\sum_{k=1}^{n}S_{k}(X)-K,0\right).
\end{align}
If the time discretization consists of $\ell=m^\ell$ points with $\ell\in\N_0$, $m\in\N$, we denote the payoff function by $f^\ell$ and it is therefore given by (\ref{eq:discretepayoff}) with $n=m^{\ell}$.

Thus we can approximate the price $C$ using multilevel QMC integration with finest level $L$ by
\begin{align*}
C&\approx\exp(-rT)\left(\EE\left(f^0(X^0)\right)+\sum_{k=1}^{L}\EE\left(f^{\ell}(X^\ell)-f^{\ell-1}(C_{m,\ell}X^\ell)\right)\right)\\
&=\exp(-rT)\left(\EE\left(f^0(X^0)\right)+\sum_{k=1}^{L}\EE\left(g^{\ell}(h_1^\ell(X^\ell),h_2^\ell(X^\ell))\right)\right)
\end{align*}
where $g^\ell(y_1,y_2)=\max(y_1,0)-\max(y_2,0)$,
\begin{align*}
h_1^\ell(X^\ell)=\frac{1}{m^\ell}\sum_{k=1}^{m^\ell}S_0\exp\left(\left(r-\frac{\sigma^2}{2}\right)k\frac{T}{m^\ell}+\sigma\sqrt{\frac{T}{m^\ell}}\sum_{i=1}^{k}X_i^\ell\right)
\end{align*}
and
\begin{align*}
h_2^\ell(X^\ell)=\frac{1}{m^{\ell-1}}\sum_{k=1}^{m^{\ell-1}}S_0\exp\left(\left(r-\frac{\sigma^2}{2}\right)k\frac{T}{m^{\ell-1}}+\sigma\sqrt{\frac{T}{m^{\ell-1}}}\sum_{i=1}^{k}(C_{m,\ell}X^\ell)_i\right)\,.
\end{align*}
In applying Algorithm \ref{alg:general}, we have to compute the vectors $a_{1}^\ell, a_{2}^\ell$ for each level. This can be done as in Example \ref{ex:asian}. For $\ell=1,\ldots,L$ and $j=1\dots,m^\ell$ we get 
\begin{align*}
a_{1,j}^{\ell}&=\EE\left(X_{j}h_{1}^{(l)}\right)\\
&=\sum_{k=1}^{m^\ell}\frac{S_{0}}{m^\ell}\exp\left(\Bigl(r-\frac{\sigma^2}{2}\Bigr)k\frac{T}{m^\ell}\right)\EE\left(X_{j}\exp\biggl(\sigma\sqrt{\frac{T}{m^\ell}}\sum_{i=1}^{k}X_{i}\biggr)\right)\\
&=\sum_{k=j}^{m^\ell}\frac{S_{0}}{m^\ell}\exp\left(\Bigl(r-\frac{\sigma^2}{2}\Bigr)k\frac{T}{m^\ell}\right)\sigma\sqrt{\frac{T}{m^\ell}}\exp\left(\frac{\sigma^2}{2}\frac{T}{m^\ell}k\right)\\
&=\sum_{k=j}^{m^\ell}\frac{S_{0}\sigma}{m^\ell}\sqrt{\frac{T}{m^\ell}}\exp\left(rk\,\frac{T}{m^\ell}\right)
\end{align*}
and
\begin{align*}
a_{2,j}^{\ell}&=\EE\left(X_{j}h_{2}^{(\ell)}\right)\\
&=\sum_{k=1}^{m^{\ell-1}}\frac{S_{0}}{m^{\ell-1}}\exp\left(\Bigl(r-\frac{\sigma^2}{2}\Bigr)k\frac{T}{m^{\ell-1}}\right)\EE\left(X_{j}\exp\biggl(\sigma\sqrt{\frac{T}{m^{\ell}}}\sum_{i=1}^{k}\sum_{p=1}^{m}X_{(i-1)m+p}\biggr)\right)\\
&=\sum_{k=\left\lfloor\frac{j-1}{m}\right\rfloor+1}^{m^{\ell-1}}\frac{S_{0}}{m^{\ell-1}}\exp\left(\Bigl(r-\frac{\sigma^2}{2}\Bigr)k\frac{T}{m^{\ell-1}}\right)\sigma\sqrt{\frac{T}{m^\ell}}\exp\left(\frac{\sigma^2}{2}\frac{T}{m^{\ell-1}}k\right)\\
&=\sum_{k=\left\lfloor\frac{j-1}{m}\right\rfloor+1}^{m^{\ell-1}}\frac{S_{0}\sigma}{m^{\ell-1}}\sqrt{\frac{T}{m^\ell}}\exp\left(rk\,\frac{T}{m^{\ell-1}}\right)\,.
\end{align*}

Now we compare the multilevel QMC method combined with the regression algorithm with multilevel Monte Carlo and multilevel quasi-Monte Carlo (forward and PCA sampling) numerically. For that we choose the parameters as $r=0.04$, $\sigma=0.3$, $S_0=100$, $K=100$ and $T=1$. At the finest level we start with $2^{10}$ discretization points and at each coarser level we divide in half the number of points, i.e. $L=10$ and $m=2$. Furthermore, the number of sample points are doubled at each level starting with $N_L$ sample points at the finest level $L$. For the QMC approaches we take a Sobol sequence with a random shift. In Table \ref{tbl:asian1} we compare for different values $N_L$ both the average and the standard deviation of the price of the Asian call option based on $1000$ independent runs. Moreover, the average computing time for one run is given in brackets. As we can see, the regression algorithm yields the lowest standard deviation, but the computing time of the regression algorithm is slightly worse than the forward method. However, the regression algorithm is better than the PCA construction measured in both standard deviation and computing time.

\begin{table}[ht]
\setlength{\tabcolsep}{4pt}
\setlength{\extrarowheight}{1pt}
\begin{center}
{\small
\begin{tabular}{l|cc|c|ccccc}
&	\multicolumn{2}{c|}{multilevel}    				&\multicolumn{6}{c}{multilevel QMC}\\
&	\multicolumn{2}{c|}{Monte Carlo}		&\multicolumn{2}{c}{forward}		&\multicolumn{2}{c}{PCA} &\multicolumn{2}{c}{regression}\\\hline
$N_L$&average&stddev&average&stddev&average&stddev&average&stddev \\\hline\hline
2	 &7.717& $0.41\!\times\!10^{0}$	  &7.735& $0.19\!\times\!10^{-1}$	  &7.736& $0.16\!\times\!10^{-1}$	  &7.739& $0.10\!\times\!10^{-1}$\\
&\multicolumn{2}{c|}{(0.0057\,s)}		&\multicolumn{2}{c}{(0.0057\,s)}	&\multicolumn{2}{c}{(0.0088\,s)}	&\multicolumn{2}{c}{(0.0069\,s)}\\\hline
4	 &7.738& $0.19\!\times\!10^{0}$   &7.734& $0.71\!\times\!10^{-2}$   &7.736& $0.44\!\times\!10^{-2}$   &7.738& $0.29\!\times\!10^{-2}$\\
& \multicolumn{2}{c|}{(0.0074\,s)}  &\multicolumn{2}{c}{(0.0074\,s)}  &\multicolumn{2}{c}{(0.0118\,s)}  &\multicolumn{2}{c}{(0.0091\,s)}\\\hline
8	 &7.748& $0.54\!\times\!10^{-1}$	&7.737& $0.30\!\times\!10^{-2}$	  &7.737& $0.14\!\times\!10^{-2}$	  &7.736& $0.10\!\times\!10^{-2}$\\
& \multicolumn{2}{c|}{(0.0101\,s)}	& \multicolumn{2}{c}{(0.0100\,s)}	& \multicolumn{2}{c}{(0.0165\,s)}	& \multicolumn{2}{c}{(0.0124\,s)}\\\hline
16 &7.746& $0.40\!\times\!10^{-1}$	&7.736& $0.11\!\times\! 10^{-2}$  &7.737& $0.69\!\times\! 10^{-3}$  &7.736& $0.30\!\times\! 10^{-3}$\\
& \multicolumn{2}{c|}{(0.0157\,s)}	& \multicolumn{2}{c}{(0.0157\,s)} & \multicolumn{2}{c}{(0.0279\,s)}	& \multicolumn{2}{c}{(0.0194\,s)}\\\hline
32 &7.728& $0.31\!\times\!10^{-1}$	&7.736& $0.49\!\times\! 10^{-3}$	&7.737& $0.21\!\times\! 10^{-3}$	&7.736& $0.10\!\times\! 10^{-3}$\\
& \multicolumn{2}{c|}{(0.0266\,s)}	& \multicolumn{2}{c}{(0.0265\,s)} & \multicolumn{2}{c}{(0.0585\,s)}	& \multicolumn{2}{c}{(0.0326\,s)}\\\hline
64 &7.739& $0.81\!\times\!10^{-2}$	&7.736& $0.20\!\times\! 10^{-3}$  &7.737& $0.69\!\times\! 10^{-4}$	&7.737& $0.32\!\times\! 10^{-4}$\\
& \multicolumn{2}{c|}{(0.0486\,s)}	& \multicolumn{2}{c}{(0.0484\,s)}	& \multicolumn{2}{c}{(0.1202\,s)} & \multicolumn{2}{c}{(0.0583\,s)}\\
\end{tabular}}
\caption{Multilevel (Q)MC using $2^{10}$ time steps $(L=10)$. The average and the standard deviation of the option price are based on $1000$ runs. The average computing time is given in brackets.}\label{tbl:asian1}
\end{center}
\end{table}

In Table \ref{tbl:asian2} we compare the regression algorithm both for multilevel QMC and for QMC with $2^{10}$ time steps $(L=10)$. We can observe that the standard deviation as well as the computing time of the multilevel QMC setting is significantly better compared with crude QMC.

\begin{table}[ht]
\setlength{\tabcolsep}{4pt}
\setlength{\extrarowheight}{1pt}
\begin{center}
\begin{tabular}{lc|ccc}
										&  						 & \quad average\quad~ & \quad stddev\quad~	& \quad time (s)\quad~   \\\hline\hline
MLQMC - Regression	& ($N_L=2^6$)		& 7.7366	& $0.32\!\times\!10^{-4}$		& 0.0323 \\
QMC - Regression 		& ($N=2^{12}$)	& 7.7362	& $1.01\!\times\!10^{-4}$		& 0.1511 \\ 
\end{tabular}
\caption{QMC and multilevel QMC, both combined with the regression algorithm, with $2^{10}$ time steps ($L=10$) based on $1000$ runs.}\label{tbl:asian2}
\end{center}
\end{table}

\end{document}